\DeclareFontFamily{U}{euf}{}
\DeclareFontShape{U}{euf}{m}{n}{%
  <5><6><7><8><9>gen*eufm%
  <10><10.95><12><14.4><17.28><20.74><24.88>eufm10%
  }{}
\DeclareFontShape{U}{euf}{b}{n}{%
  <5><6><7><8><9>gen*eufb%
  <10><10.95><12><14.4><17.28><20.74><24.88>eufb10%
  }{}
\DeclareFontFamily{U}{msb}{}
\DeclareFontShape{U}{msb}{m}{n}{%
  <5><6><7><8><9>gen*msbm%
  <10><10.95><12><14.4><17.28><20.74><24.88>msbm10%
  }{}
\DeclareFontFamily{U}{msa}{}
\DeclareFontShape{U}{msa}{m}{n}{%
  <5><6><7><8><9>gen*msam%
  <10><10.95><12><14.4><17.28><20.74><24.88>msam10%
  }{}
\newtheorem{theorem}{Theorem}[section]
\newtheorem{lemma}[theorem]{Lemma}
\newtheorem{proposition}[theorem]{Proposition}
\newtheorem{corollary}[theorem]{Corollary}
\theoremstyle{definition}
\theoremstyle{remark}
\numberwithin{equation}{section}
\begin{document}

\title[]
{On Carlitz's type $q$-Euler numbers associated with the fermionic $p$-adic integral on $\mathbb Z_p$}

\author{Min-Soo Kim, Taekyun Kim and Cheon-Seoung Ryoo}

\begin{abstract}
In this paper, we consider the following problem in \cite{KKL}:
``Find Witt's formula for Carlitz's type $q$-Euler numbers.''
We give Witt's formula for Carlitz's type $q$-Euler numbers,
which is an answer to the above problem.
Moreover, we obtain a new $p$-adic $q$-$l$-function $l_{p,q}(s,\chi)$ for Dirchlet's character $\chi,$
with the property that
$$l_{p,q}(-n,\chi)=E_{n,\chi_n,q}-\chi_n(p)[p]_q^nE_{n,\chi_n,q^p},\quad n=0,1,\ldots$$
using the fermionic $p$-adic integral on $\mathbb Z_p.$
\end{abstract}

\address{Department of Mathematics, KAIST, 373-1 Guseong-dong, Yuseong-gu, Daejeon 305-701, South Korea}
\email{minsookim@kaist.ac.kr}

\address{Division of General Education-Mathematics, Kwangwoon University, Seoul, 139-701, South Korea}
\email{tkkim@kw.ac.kr}

\address{Department of Mathematics, Hannam University, Daejeon 306-791, South Korea}
\email{ryoocs@hnu.kr}

\subjclass[2000]{11B68, 11S80}
\keywords{$p$-adic integrals, Witt's formula, Carlitz's type $q$-Euler numbers}


\maketitle


\maketitle


\def\ord{\text{ord}_p}
\def\C{\mathbb C_p}
\def\BZ{\mathbb Z}
\def\Z{\mathbb Z_p}
\def\Q{\mathbb Q_p}

\section{Introduction}

Throughout this paper, let $p$ be an odd prime number.
The symbol $\mathbb Z_p,\mathbb Q_p$ and $\mathbb C_p$ denote
the rings of $p$-adic integers, the field of $p$-adic numbers and
the field of $p$-adic completion of the algebraic closure of $\mathbb Q_p,$ respectively.
The $p$-adic absolute value in $\mathbb C_p$ is normalized in such way that
$|p|_p=p^{-1}.$
Let $\mathbb N$ be the natural numbers and $\mathbb Z^+=\mathbb N\cup\{0\}.$

As the definition of $q$-number, we use the following notations:
$$[x]_q=\frac{1-q^x}{1-q}\quad\text{and}\quad [x]_{-q}=\frac{1-(-q)^x}{1+q}.$$
Note that $\lim_{q\to1}[x]_q=x$ for $x\in\mathbb Z_p,$ where $q$ tends to 1 in the region $0<|q-1|_p<1.$

When one talks of $q$-analogue, $q$ is
variously considered as an indeterminate, a complex number $q\in\mathbb
C,$ or a $p$-adic number $q\in\mathbb C_p.$ If $q=1+t\in\mathbb C_p,$ one normally assumes $|t|_p<1.$
We shall further suppose that $\ord(t)>1/(p-1),$ so that $q^x=\exp(x\log q)$ for $|x|_p\leq1.$ If $q\in\mathbb C,$ then we assume that
$|q|<1.$

After Carlitz \cite{Ca48,Ca54} gave $q$-extensions of the classical Bernoulli numbers and polynomials,
the $q$-extensions of Bernoulli and Euler numbers and polynomials have been studied by
several authors (cf. \cite{CCKS,CKOS,Ca48,Ca54,KT1,KT,KT2,KT3,TK07,K07,K08,K08-1,K09,K09-1,K09-2,TK10,
KJKR,KKL,OS,OSRC,Sa}).
The Euler numbers and polynomials have been studied by researchers in the field of number theory, mathematical physics and
so on (cf. \cite{Ca48,Ca54,CSK,TK07,K08,K09,K09-1,K09-2,TK10,Ro}).
Recently, various $q$-extensions of these numbers and polynomials have been studied by many mathematicians
(cf. \cite{KT,KT2,KT3,K07,K08-1,KJKR,KKL,OSRC}).
Also, some authors have studied in the several area of $q$-theory (cf. \cite{CCKS,CKOS,GG,TK10,OS}).

It is known that the generating function of Euler numbers $F(t)$ is given by
\begin{equation}\label{Eu-gen}
F(t)=\frac 2{e^t+1}=\sum_{n=0}^\infty E_n\frac{t^n}{n!}.
\end{equation}
From (\ref{Eu-gen}), we known that the recurrence formula of Euler numbers is given by
\begin{equation}\label{Eu-gen-recu}
E_0=1,\qquad (E+1)^n+E_{n}=0\quad\text{if } n>0
\end{equation}
with the usual convention of replacing $E^n$ by $E_n$ (see \cite{KT2,KKL}).

In \cite{KJKR}, the $q$-extension of Euler numbers $E_{n,q}^*$ are defined as
\begin{equation}\label{Eu*-recu}
E_{0,q}^*=1,\qquad
(qE^*+1)^n+E_{n,q}^*=
\begin{cases}
2&\text{if }n=0 \\
0&\text{if } n>0
\end{cases}
\end{equation}
with the usual convention of replacing $(E^*)^n$ by $E_{n,q}^*.$

As the same motivation of the construction in \cite{KKL}, Carlitz's type $q$-Euler numbers $E_{n,q}$ are defined as
\begin{equation}\label{Eu-recu}
E_{0,q}=\frac{2}{[2]_q},\qquad
q(qE+1)^n+E_{n,q}=\begin{cases}
2&\text{if }n=0 \\
0&\text{if } n>0
\end{cases}
\end{equation}
with the usual convention of replacing $E^n$ by $E_{n,q}.$
It was shown that $\lim_{q\to1}E_{n,q}=E_n,$ where $E_n$ is the $n$th Euler number.
In the complex case, the generating function of Carlitz's type $q$-Euler numbers $F_q(t)$ is given by
\begin{equation}\label{Ca-q-Eu-int}
F_{q}(t)=\sum_{n=0}^\infty E_{n,q}\frac{t^n}{n!}=2\sum_{n=0}^\infty(-q)^ne^{[n]_qt},
\end{equation}
where $q$ is complex number with $|q|<1$ (see \cite{KKL}).
The remark point is that the series on the right-hand side of (\ref{Ca-q-Eu-int}) is uniformly convergent in the wider sense.
In $p$-adic case, Kim et al. \cite{KKL} could not determine the generating function of
Carlitz's type $q$-Euler numbers and Witt's formula for Carlitz's type $q$-Euler numbers.

In this paper, we obtain the generating function of Carlitz's type $q$-Euler numbers in the $p$-adic case.
Also, we give Witt's formula for Carlitz's type $q$-Euler numbers,
which is a partial answer to the problem in \cite{KKL}.
Moreover, we obtain a new $p$-adic $q$-$l$-function $l_{p,q}(s,\chi)$ for Dirchlet's character $\chi,$
with the property that
$$l_{p,q}(-n,\chi)=E_{n,\chi_n,q}-\chi_n(p)[p]_q^nE_{n,\chi_n,q^p}$$
for $n\in \mathbb Z^+$
using the fermionic $p$-adic integral on $\mathbb Z_p.$

\section{Carlitz's type $q$-Euler numbers in the $p$-adic case}

Let $UD(\mathbb Z_p)$ be the space of uniformly differentiable function on $\mathbb Z_p.$
Then the $p$-adic $q$-integral of a function $f\in UD(\mathbb Z_p)$ on $\mathbb Z_p$
is defined by
\begin{equation}\label{Iqf}
I_q(f)=\int_{\Z} f(a)d\mu_q(a)=\lim_{N\rightarrow\infty}\frac1{[p^{N}]_q}
\sum_{a=0}^{p^N-1}f(a)q^a
\end{equation}
(cf. \cite{CSK,KT1,KT,KT2,KT3,TK07,K07,K08,K08-1,K09,K09-1,K09-2,TK10,KJKR,OS,OSRC}).
The bosonic $p$-adic integral on $\mathbb Z_p$ is considered as the limit $q\to1,$ i.e.,
\begin{equation}\label{q=1}
I_{1}(f)=\int_{\mathbb Z_p}f(a)d\mu_{1}(a).
\end{equation}
From (\ref{Iqf}), we have the fermionic $p$-adic integral on $\mathbb Z_p$ as
follows:
\begin{equation}\label{q=-1}
I_{-1}(f)=\lim_{q\to-1}I_q(f)=\int_{\mathbb Z_p}f(a)d\mu_{-1}(a).
\end{equation}
Using formula (\ref{q=-1}), we can readily derive the classical Euler polynomials, $E_{n}(x),$ namely
\begin{equation}\label{h-Euer}
2\int_{\mathbb Z_p}
e^{(x+y)t} d\mu_{-1}(y)=\frac{2e^{xt}}{e^t+1}=\sum_{n=0}^\infty E_{n}(x)\frac{t^n}{n!}.
\end{equation}
In particular when $x=0,$ $E_{n}(0)=E_n$ is well known the Euler numbers (cf. \cite{KT2,TK10,OS}).

By definition of $I_{-1}(f),$ we show that
\begin{equation}\label{-1-ft-eq}
I_{-1}(f_1)+I_{-1}(f)=2f(0),
\end{equation}
where $f_1(x)=f(x+1)$ (see \cite{KT2}). By (\ref{-1-ft-eq}) and induction, we obtain the following
fermionic $p$-adic integral equation
\begin{equation}\label{-1-ft-eq-1}
I_{-1}(f_n)+(-1)^{n-1}I_{-1}(f)=2\sum_{i=0}^{n-1}(-1)^{n-i-1}f(i),
\end{equation}
where $n=1,2,\ldots$ and $f_n(x)=f(x+n).$
From (\ref{-1-ft-eq-1}), we note that
\begin{align}
&I_{-1}(f_n)+I_{-1}(f)=2\sum_{i=0}^{n-1}(-1)^{i}f(i)\quad\text{if $n$ is odd};
\label{-1-ft-eq-1-o} \\
&I_{-1}(f_n)-I_{-1}(f)=2\sum_{i=0}^{n-1}(-1)^{i+1}f(i)\quad\text{if $n$ is even}.
 \label{-1-ft-eq-1-e}
\end{align}
For $x\in\Z$ and any integer $i\geq0,$ we define
\begin{equation}\label{p-binom}
\binom xi=
\begin{cases}
\frac{x(x-1)\cdots(x-i+1)}{i!}&\text{if }i\geq1, \\
1, &\text{if }i=0.
\end{cases}
\end{equation}
It is easy to see that $\binom xi\in\Z$ (see \cite[p.\,172]{Ro}).
We put $x\in\mathbb C_p$ with $\ord(x)>1/(p-1)$ and $|1-q|_p<1.$
We define $q^x$ for $x\in\Z$ by
\begin{equation}\label{q^x}
q^x=\sum_{i=0}^\infty\binom xi(q-1)^i\quad\text{and}\quad [x]_q=\sum_{i=1}^\infty\binom xi(q-1)^{i-1}.
\end{equation}
If we set $f(x)=q^x$ in (\ref{-1-ft-eq-1-o}) and (\ref{-1-ft-eq-1-e}), we have
\begin{align}
&I_{-1}(q^x)=\frac{2}{q^n+1}\sum_{i=0}^{n-1}(-1)^{i}q^i=\frac2{q+1}\quad\text{if $n$ is odd};
\label{q-ex-o} \\
&I_{-1}(q^x)=\frac{2}{q^n-1}\sum_{i=0}^{n-1}(-1)^{i+1}q^{i}=\frac2{q+1}\quad\text{if $n$ is even}.
 \label{q-ex-e}
\end{align}
Thus for each $l\in\mathbb N$ we obtain $I_{-1}(q^{lx})=\frac2{q^l+1}.$
Therefore we have
\begin{equation}\label{q-num}
\begin{aligned}
I_{-1}(q^x[x]_q^n)&=\frac1{(1-q)^n}\sum_{l=0}^{n}\binom nl(-l)^lI_{-1}(q^{(l+1)x})\\
&=\frac1{(1-q)^n}\sum_{l=0}^{n}\binom nl(-l)^l\frac2{q^{l+1}+1}.
\end{aligned}
\end{equation}
Also, if $f(x)=q^{lx}$ in (\ref{-1-ft-eq}), then
\begin{equation}\label{q-ft-0}
I_{-1}(q^{l(x+1)})+I_{-1}(q^{lx})=2f(0)=2.
\end{equation}
On the other hand, by (\ref{q-ft-0}), we obtain that
\begin{equation}\label{q-num-m}
\begin{aligned}
&I_{-1}(q^{x+1}[x+1]_q^n)+I_{-1}(q^x[x]_q^n) \\
&=\frac1{(1-q)^n}\sum_{l=0}^{n}\binom nl(-1)^l\left(I_{-1}((q^{l+1})^{x+1})+I_{-1}((q^{l+1})^x)\right)\\
&=\frac2{(1-q)^n}\sum_{l=0}^{n}\binom nl(-1)^l \\
&=0
\end{aligned}
\end{equation}
is equivalent to
\begin{equation}\label{q-num-q-int}
\begin{aligned}
0&=I_{-1}(q^{x+1}[x+1]_q^n)+I_{-1}(q^x[x]_q^n) \\
&=qI_{-1}(q^{x}(1+q[x]^n))+I_{-1}(q^x[x]_q^n) \\
&=qI_{-1}\left(q^{x}\sum_{l=0}^n\binom nlq^l[x]^l\right)+I_{-1}(q^x[x]_q^n) \\
&=q\sum_{l=0}^n\binom nlq^lI_{-1}(q^x[x]^l)+I_{-1}(q^x[x]_q^n).
\end{aligned}
\end{equation}
From the definition of fermionic $p$-adic integral on $\mathbb Z_p$
and (\ref{q-num}), we can derive the following formula
\begin{equation}\label{re-f}
\begin{aligned}
I_{-1}(q^x[x]_q^n)&=\sum_{n=0}^\infty\int_{\Z}[x]_q^nq^xd\mu_{-1}(x) \\
&= \lim_{N\to\infty}\sum_{a=0}^{p^N-1}
\frac1{(1-q)^n}\sum_{i=0}^n\binom ni(-1)^iq^{ia}(-q)^{a} \\
&= \frac1{(1-q)^n}\sum_{i=0}^n\binom ni(-1)^i\lim_{N\to\infty}\sum_{a=0}^{p^N-1}(-1)^a(q^{i+1})^a \\
&=\frac1{(1-q)^n}\sum_{i=0}^n\binom ni(-1)^i\frac{2}{1+q^{i+1}}
\end{aligned}
\end{equation}
is equivalent to
\begin{equation}\label{re-f-eq}
\begin{aligned}
\sum_{n=0}^\infty I_{-1}(q^x[x]_q^n)\frac{t^n}{n!}
&=\sum_{n=0}^\infty \frac1{(1-q)^n}\sum_{i=0}^n\binom ni(-1)^i\frac{2}{1+q^{i+1}}\frac{t^n}{n!} \\
&=2\sum_{n=0}^\infty(-q)^ne^{[n]_qt}.
\end{aligned}
\end{equation}
From (\ref{q-ft-0}), (\ref{q-num-m}), (\ref{q-num-q-int}), (\ref{re-f}) and (\ref{re-f-eq}), it is easy to show that
\begin{equation}\label{Ca-q-Eu-recu}
q\sum_{l=0}^n\binom nlq^lE_{l,q}+E_{n,q}=
\begin{cases}
2&\text{if }n=0 \\
0&\text{if } n>0,
\end{cases}
\end{equation}
where $E_{n,q}$ are Carlitz's type $q$-Euler numbers defined by (see \cite{KKL})
\begin{equation}\label{Ca-q-Eu}
F_{q}(t)=2\sum_{n=0}^\infty(-q)^ne^{[n]_qt}=\sum_{n=0}^\infty E_{n,q}\frac{t^n}{n!}.
\end{equation}
Therefore, we obtain the recurrence formula for the Carlitz's type $q$-Euler numbers as follows:
\begin{equation}\label{Ca-q-Eu-recu-bi}
q(qE+1)^n+E_{n,q}=
\begin{cases}
2&\text{if }n=0 \\
0&\text{if } n>0
\end{cases}
\end{equation}
with the usual convention of replacing $E^n$ by $E_{n,q}.$
Therefore, by (\ref{re-f-eq}), (\ref{Ca-q-Eu}) and (\ref{Ca-q-Eu-recu-bi}),
we obtain the following theorem.

\begin{theorem}[Witt's formula for $E_{n,q}$]\label{witt-open}
For $n\in\mathbb Z^+,$
$$E_{n,q}=\frac1{(1-q)^n}\sum_{i=0}^n\binom ni(-1)^i\frac{2}{1+q^{i+1}}=\int_{\Z}[x]_q^nq^xd\mu_{-1}(x),$$
which is a partial answer to the problem in \cite{KKL}.
Carlitz's type $q$-Euler numbers $E_n=E_{n,q}$ can be determined inductively by
\begin{equation}
q(qE+1)^n+E_{n,q}=
\begin{cases}
2&\text{if }n=0 \\
0&\text{if } n>0
\end{cases}
\end{equation}
with the usual convention of replacing $E^n$ by $E_{n,q}.$
\end{theorem}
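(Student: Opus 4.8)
The plan is to collect the identities already assembled in (\ref{q-num})--(\ref{Ca-q-Eu-recu-bi}) into the two assertions of the theorem: the integral-cum-explicit formula for $E_{n,q}$, and its inductive characterization.

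First I would establish the explicit formula (\ref{re-f}). Expanding the $q$-number as $[x]_q^n=(1-q)^{-n}(1-q^x)^n=(1-q)^{-n}\sum_{i=0}^n\binom ni(-1)^iq^{ix}$, multiplying by $q^x$, and using linearity of $I_{-1}$ to integrate the finite sum term by term, one is reduced to evaluating $I_{-1}(q^{(i+1)x})$. The only input needed is that $I_{-1}(q^{lx})=2/(q^l+1)$ for every $l\in\mathbb N$; this is precisely (\ref{q-ex-o})--(\ref{q-ex-e}), where both the odd-$n$ and the even-$n$ computations return the common value $2/(q+1)$ and the general $l$ follows on replacing $q$ by $q^l$ in the functional equation. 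This yields
$$\int_{\Z}[x]_q^nq^xd\mu_{-1}(x)=\frac1{(1-q)^n}\sum_{i=0}^n\binom ni(-1)^i\frac{2}{1+q^{i+1}}.$$

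Next I would identify this quantity with $E_{n,q}$. Forming the exponential generating function and summing the resulting series as in (\ref{re-f-eq}),
$$\sum_{n=0}^\infty\left(\int_{\Z}[x]_q^nq^xd\mu_{-1}(x)\right)\frac{t^n}{n!}=2\sum_{n=0}^\infty(-q)^ne^{[n]_qt}=F_q(t),$$
and comparing with the defining relation (\ref{Ca-q-Eu}), namely $F_q(t)=\sum_{n=0}^\infty E_{n,q}t^n/n!$, matching the coefficients of $t^n/n!$ gives $E_{n,q}=\int_{\Z}[x]_q^nq^xd\mu_{-1}(x)$ for all $n\in\mathbb Z^+$. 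Together with the previous display this proves the stated double equality.

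For the inductive characterization I would feed $f(x)=q^x[x]_q^n$ into the basic fermionic identity (\ref{-1-ft-eq}). Writing $[x+1]_q=1+q[x]_q$, expanding by the binomial theorem as in (\ref{q-num-q-int}), and using $I_{-1}(q^{l(x+1)})+I_{-1}(q^{lx})=2$ from (\ref{q-ft-0}), one obtains
$$q\sum_{l=0}^n\binom nlq^lE_{l,q}+E_{n,q}=\begin{cases}2&\text{if }n=0\\0&\text{if }n>0,\end{cases}$$
which is (\ref{Ca-q-Eu-recu}). Rewriting the left side under the umbral convention $E^l\mapsto E_{l,q}$ turns $q\sum_{l=0}^n\binom nlq^lE_{l,q}$ into $q(qE+1)^n$, which is exactly (\ref{Ca-q-Eu-recu-bi}) and the inductive formula in the statement.

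\textbf{Main obstacle.} There is essentially no analytic difficulty: every series appearing is either a finite binomial sum, for which term-by-term integration against the linear functional $I_{-1}$ is automatic, or a geometric-type series whose convergence — $p$-adically when $q\in\C$ with $|1-q|_p<1$, and in the complex sense when $|q|<1$ — has already been dealt with in (\ref{re-f})--(\ref{re-f-eq}). The only point demanding care is the bookkeeping of passing consistently between the $[x]_q$-expansion and the $q^{ix}$-expansion in all three steps, and checking the two expressions for $E_{0,q}$ agree, both equalling $2/[2]_q=2/(1+q)$.
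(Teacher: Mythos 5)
Your proposal is correct and follows essentially the same route as the paper: the explicit formula via the binomial expansion of $[x]_q^n$ and the evaluation $I_{-1}(q^{lx})=2/(q^l+1)$ from the fermionic functional equation, the identification with $E_{n,q}$ by matching exponential generating functions against $F_q(t)=2\sum_{n\ge0}(-q)^ne^{[n]_qt}$, and the recurrence by feeding $f(x)=q^x[x]_q^n$ into $I_{-1}(f_1)+I_{-1}(f)=2f(0)$ with $[x+1]_q=1+q[x]_q$. This matches the chain of identities (\ref{q-ex-o})--(\ref{Ca-q-Eu-recu-bi}) that the paper cites as its proof, and your version even silently corrects the sign typo appearing in (\ref{q-num}).
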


Carlitz type $q$-Euler polynomials $E_{n,q}(x)$ are defined by means of the generating
function $F_{q}(x,t)$ as follows:
\begin{equation}\label{tw-q-E-gen}
F_{q}(x,t)=2\sum_{k=0}^\infty(-1)^kq^ke^{[k+x]_qt}=\sum_{n=0}^\infty E_{n,q}(x)\frac{t^n}{n!}.
\end{equation}
In the cases $x=0,$ $E_{n,q}(0)=E_{n,q}$ will be called Carlitz type $q$-Euler numbers
(cf. \cite{KT3,OS}).
We also can see that the generating functions $F_{q}(x,t)$ are determined as solutions of the
following $q$-difference equation:
\begin{equation}\label{q-diff}
F_{q}(x,t)=2e^{[x]_qt}- q e^tF_{q}(x,qt).
\end{equation}
From (\ref{tw-q-E-gen}), we get the following:

\begin{lemma}\label{gen-le}
\begin{enumerate}
\item $F_{q}(x,t)=2e^{\frac{t}{1-q}}\sum_{j=0}^\infty\left(\frac1{q-1}\right)^jq^{xj}
\frac1{1+ q^{j+1}}\frac{t^j}{j!}.$
\item $E_{n,q}(x)=2\sum_{k=0}^\infty (-1)^kq^k[k+x]_q^n.$
\end{enumerate}
\end{lemma}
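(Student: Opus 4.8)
The plan is to derive both parts directly from the defining series (\ref{tw-q-E-gen}), $F_{q}(x,t)=2\sum_{k=0}^\infty(-1)^kq^ke^{[k+x]_qt}$, the only extra ingredient being the elementary identity $[k+x]_q=\frac{1}{1-q}-\frac{q^{k+x}}{1-q}$ together with a termwise rearrangement of the resulting double series, which is legitimate because the series defining $F_{q}(x,t)$ converges uniformly in the wider sense (as remarked after (\ref{Ca-q-Eu-int})).

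For part (2), I would expand $e^{[k+x]_qt}=\sum_{n=0}^\infty[k+x]_q^n\frac{t^n}{n!}$ inside (\ref{tw-q-E-gen}) and interchange the sums over $k$ and $n$, obtaining
$$F_{q}(x,t)=\sum_{n=0}^\infty\left(2\sum_{k=0}^\infty(-1)^kq^k[k+x]_q^n\right)\frac{t^n}{n!}.$$
Comparing the coefficient of $t^n/n!$ with $\sum_{n=0}^\infty E_{n,q}(x)\frac{t^n}{n!}$ yields $E_{n,q}(x)=2\sum_{k=0}^\infty(-1)^kq^k[k+x]_q^n$, which is (2).

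For part (1), I would instead write $e^{[k+x]_qt}=e^{t/(1-q)}\exp\!\bigl(-\tfrac{q^{k+x}}{1-q}t\bigr)=e^{t/(1-q)}\sum_{j=0}^\infty\bigl(\tfrac{q^{k+x}}{q-1}\bigr)^j\frac{t^j}{j!}$, substitute this into (\ref{tw-q-E-gen}), pull the factor $e^{t/(1-q)}$ out of the sum over $k$, and interchange the sums over $k$ and $j$ to get
$$F_{q}(x,t)=2e^{t/(1-q)}\sum_{j=0}^\infty\left(\frac1{q-1}\right)^jq^{xj}\frac{t^j}{j!}\sum_{k=0}^\infty(-1)^kq^{k(j+1)}.$$
Evaluating the inner geometric series $\sum_{k=0}^\infty(-q^{j+1})^k=\frac1{1+q^{j+1}}$ then gives exactly the claimed closed form.

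The only genuine obstacle is justifying the rearrangements of the double series and the summation of the geometric factor $\sum_{k}(-q^{j+1})^k$; in the complex range $|q|<1$ this is immediate, while in the $p$-adic range it reduces to precisely the manipulation already carried out in passing from (\ref{re-f}) to (\ref{re-f-eq}) (there $\frac1{1+q^{j+1}}$ appears as $\tfrac12 I_{-1}(q^{(j+1)x})$). Once these interchanges are in place, both statements are just a comparison of coefficients and a geometric-series evaluation; as a consistency check, extracting the coefficient of $t^n/n!$ from part (1) and setting $x=0$ recovers the Witt formula of Theorem~\ref{witt-open}.
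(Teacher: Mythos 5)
Your proof is correct and follows essentially the route the paper intends: the paper states Lemma \ref{gen-le} with only the remark ``From (\ref{tw-q-E-gen}), we get the following,'' and your expansion of $e^{[k+x]_qt}$ (directly for part (2), and via $[k+x]_q=\frac{1}{1-q}-\frac{q^{k+x}}{1-q}$ plus the geometric factor $\frac{1}{1+q^{j+1}}$ for part (1)) is exactly the computation carried out in the analogous passages (\ref{re-f})--(\ref{re-f-eq}) and (\ref{prob-2}). Your remark that in the $p$-adic case the series $\sum_k(-q^{j+1})^k$ must be read as the fermionic limit $\lim_{N\to\infty}\sum_{a=0}^{p^N-1}(-q^{j+1})^a$ is the right caveat and is consistent with how the paper itself handles it.
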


It is clear from (1) and (2) of Lemma \ref{gen-le} that
\begin{equation}\label{q-exi-eq}
E_{n,q}(x)=\frac{2}{(1-q)^n}\sum_{k=0}^n\binom nk \frac{(-1)^k}{1+ q^{k+1}}q^{xk}
\end{equation}
and
\begin{equation}\label{q-sums}
\begin{aligned}
\sum_{k=0}^{m-1}(-1)^kq^k[k+x]_q^n
&=\sum_{k=0}^{\infty}(-1)^{k}q^k[k+x]_q^n \\
&\quad\quad\quad-\sum_{k=0}^{\infty}(-1)^{k+m}q^{k+m}[k+m+x]_q^n \\
&=\frac1{2}\left(E_{n,q}(x)+(-1)^{m+1}q^m E_{n,q}(x+m)\right).
\end{aligned}
\end{equation}
From (\ref{q-exi-eq}) and (\ref{q-sums}), we may state

\begin{proposition}\label{q-E-re}
If $m\in\mathbb N$ and $n\in\mathbb Z^+,$ then
\begin{enumerate}
\item $E_{n,q}(x)=\frac{2}{(1-q)^n}\sum_{k=0}^n\binom nk \frac{(-1)^k}{1+ q^{k+1}}q^{xk}.$
\item $\sum_{k=0}^{m-1}(-1)^kq^k[k+x]_q^n
=\frac1{2}\left(E_{n,q}(x)+(-1)^{m+1}q^m E_{n,q}(x+m)\right).$
\end{enumerate}
\end{proposition}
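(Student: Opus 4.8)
The plan is to obtain both identities as immediate consequences of Lemma~\ref{gen-le}, which already packages the analytic content of the generating function $F_q(x,t)$ into two usable closed forms; no new convergence estimates beyond those implicit in that lemma will be needed.

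For part (1) I would read off the coefficient of $t^n/n!$ from the product form in Lemma~\ref{gen-le}(1). Writing $e^{t/(1-q)}=\sum_{i\ge0}(1-q)^{-i}t^i/i!$ and multiplying by $2\sum_{j\ge0}(q-1)^{-j}q^{xj}(1+q^{j+1})^{-1}t^j/j!$, the Cauchy product gives
\[
E_{n,q}(x)=2\sum_{j=0}^n\binom nj\frac{1}{(1-q)^{n-j}}\cdot\frac{1}{(q-1)^j}\,q^{xj}\,\frac{1}{1+q^{j+1}},
\]
and the elementary simplification $(1-q)^{-(n-j)}(q-1)^{-j}=(-1)^j(1-q)^{-n}$ collapses this to (\ref{q-exi-eq}), i.e. part (1). (Equivalently one may start from Lemma~\ref{gen-le}(2), expand $[k+x]_q^n=(1-q)^{-n}\sum_j\binom nj(-1)^jq^{(k+x)j}$ by the binomial theorem, interchange the $k$- and $j$-sums, and evaluate the inner geometric series $\sum_{k\ge0}(-q^{j+1})^k=(1+q^{j+1})^{-1}$.)

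For part (2) I would split the finite sum as a difference of two series of the type appearing in Lemma~\ref{gen-le}(2),
\[
\sum_{k=0}^{m-1}(-1)^kq^k[k+x]_q^n=\sum_{k=0}^{\infty}(-1)^kq^k[k+x]_q^n-\sum_{k=m}^{\infty}(-1)^kq^k[k+x]_q^n,
\]
and reindex $k\mapsto k+m$ in the second series. This extracts a factor $(-1)^mq^m$ and replaces the argument by $[k+(x+m)]_q^n$, so by Lemma~\ref{gen-le}(2) the two pieces are $\tfrac12 E_{n,q}(x)$ and $(-1)^mq^m\cdot\tfrac12 E_{n,q}(x+m)$ respectively; using $-(-1)^m=(-1)^{m+1}$ this is exactly (\ref{q-sums}), hence part (2).

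The only point requiring care is the legitimacy of the rearrangements and of summing the geometric series $\sum_k(-q^{j+1})^k$. In the complex setting this is harmless, since $|q|<1$ makes all series absolutely and locally uniformly convergent, as already remarked after (\ref{Ca-q-Eu-int}); in the $p$-adic setting the same identities are the ones encoded in the fermionic integral computation (\ref{re-f}), built on $I_{-1}(q^{lx})=2/(q^l+1)$ and the functional equation (\ref{-1-ft-eq}), so one may instead carry out the manipulations under $\int_{\Z}$. Either way the formal skeleton is just the binomial expansion plus the reindexing above, so I expect this bookkeeping — not any genuine difficulty — to be the main thing to get right.
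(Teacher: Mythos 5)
Your argument is correct and is essentially the paper's own: the authors likewise obtain (1) by reading off the coefficient of $t^n/n!$ from Lemma \ref{gen-le} (equation (\ref{q-exi-eq})) and (2) by the same split of the finite sum into two infinite series with the reindexing $k\mapsto k+m$ (equation (\ref{q-sums})). Both of your alternative routes to (1) — the Cauchy product from Lemma \ref{gen-le}(1) and the binomial expansion from Lemma \ref{gen-le}(2) — match the content the paper treats as immediate, so there is nothing to add.
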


\begin{proposition}\label{q-e-witt}
For $n\in\mathbb Z^+,$ the value of $\int_{\mathbb Z_p}[x+y]_q^nq^yd\mu_{-1}(y)$ is $n!$
times the coefficient of $t^n$ in the formal expansion of
$2\sum_{k=0}^\infty(-1)^kq^ke^{[k+x]_qt}$ in powers of $t.$
That is,
$E_{n,q}(x)=\int_{\mathbb Z_p}[x+y]_q^nq^yd\mu_{-1}(y).$
\end{proposition}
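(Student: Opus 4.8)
The plan is to reduce the integral directly to the closed form for $E_{n,q}(x)$ recorded in Proposition~\ref{q-E-re}(1), by expanding $[x+y]_q^n$ binomially in $q^{x+y}$ and then integrating over $\mathbb Z_p$ monomial by monomial; the generating-function statement will then follow at once from the definition \eqref{tw-q-E-gen}.

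First I would use \eqref{q^x} to write $[x+y]_q=(1-q^{x+y})/(1-q)$, so that
\[
[x+y]_q^n=\frac1{(1-q)^n}\sum_{k=0}^n\binom nk(-1)^kq^{xk}q^{yk},
\]
and hence, multiplying by $q^y$ and integrating term by term over this \emph{finite} sum,
\[
\int_{\mathbb Z_p}[x+y]_q^nq^y\,d\mu_{-1}(y)=\frac1{(1-q)^n}\sum_{k=0}^n\binom nk(-1)^kq^{xk}\int_{\mathbb Z_p}q^{(k+1)y}\,d\mu_{-1}(y).
\]
No interchange issue arises because the sum is finite; one only has to note that under the standing hypotheses ($\ord(t)>1/(p-1)$, $|1-q|_p<1$) the functions $y\mapsto q^{(k+1)y}$ and $y\mapsto[x+y]_q^nq^y$ lie in $UD(\mathbb Z_p)$, so every integral above is defined.

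Next I would invoke the evaluation obtained just before \eqref{q-num}, namely $I_{-1}(q^{ly})=2/(q^l+1)$ for every $l\in\mathbb N$ (this is \eqref{q-ex-o}--\eqref{q-ex-e} with $q$ replaced by $q^l$). Applying it with $l=k+1\ge1$ gives $\int_{\mathbb Z_p}q^{(k+1)y}\,d\mu_{-1}(y)=2/(1+q^{k+1})$, whence
\[
\int_{\mathbb Z_p}[x+y]_q^nq^y\,d\mu_{-1}(y)=\frac2{(1-q)^n}\sum_{k=0}^n\binom nk\frac{(-1)^k}{1+q^{k+1}}q^{xk},
\]
which is precisely the expression for $E_{n,q}(x)$ in Proposition~\ref{q-E-re}(1) (equivalently \eqref{q-exi-eq}). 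Thus $E_{n,q}(x)=\int_{\mathbb Z_p}[x+y]_q^nq^y\,d\mu_{-1}(y)$, and since by definition \eqref{tw-q-E-gen} one has $2\sum_{k=0}^\infty(-1)^kq^ke^{[k+x]_qt}=\sum_{n=0}^\infty E_{n,q}(x)t^n/n!$, this common value is exactly $n!$ times the coefficient of $t^n$ in that expansion. The argument is essentially routine; the only point demanding a little care is the bookkeeping in the substitution $l=k+1$: since $k$ ranges over $0,\dots,n$ we always have $l\ge1$, so the clean formula $I_{-1}(q^{ly})=2/(q^l+1)$ is available throughout and no degenerate $l=0$ term appears.
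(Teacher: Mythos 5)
Your proof is correct and follows essentially the same route as the paper: binomial expansion of $[x+y]_q^n$ via \eqref{q^x}, term-by-term integration of the finite sum, evaluation of $\int_{\mathbb Z_p}q^{(k+1)y}\,d\mu_{-1}(y)=2/(1+q^{k+1})$, and comparison with Proposition~\ref{q-E-re}(1). The only cosmetic difference is that the paper recomputes that integral as $q^{xk}\lim_{N\to\infty}\sum_{a=0}^{p^N-1}(-q^{k+1})^a$ directly, whereas you cite the earlier evaluation \eqref{q-ex-o}--\eqref{q-ex-e}; these are the same calculation.
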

\begin{proof}
From (\ref{q=-1}), we have the relation
$$\int_{\mathbb Z_p}q^{k(x+y)}q^{y}d\mu_{-1}(y)={q^{xk}}\lim_{N\rightarrow\infty}
\sum_{a=0}^{p^N-1}(-q^{k+1})^a
=\frac{2q^{xk}}{1+ q^{k+1}}$$
which leads to
$$\begin{aligned}
\int_{\mathbb Z_p}[x+y]_q^nq^yd\mu_{-1}(y)
&=2\sum_{k=0}^n\binom nk\frac1{(1-q)^n}(-1)^k\int_{\mathbb Z_p}q^{k(x+y)}q^yd\mu_{-1}(y) \\
&=\frac{2}{(1-q)^n}\sum_{k=0}^n\binom nk \frac{(-1)^k}{1+ q^{k+1}}q^{xk}.
\end{aligned}$$
The result now follows by using (1) of Proposition \ref{q-E-re}.
\end{proof}

\begin{corollary}\label{q-E-re-co}
If $n\in\mathbb Z^+,$ then
$$E_{n,q}(x)=\sum_{k=0}^n\binom nk [x]_q^{n-k}q^{kx}E_{k,q}.$$
\end{corollary}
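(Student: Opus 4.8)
The plan is to combine the Witt-type representation of Proposition \ref{q-e-witt} with the elementary $q$-number identity
\[
[k+x]_q=[x]_q+q^x[k]_q,
\]
which follows at once from $[k+x]_q=\frac{1-q^{k+x}}{1-q}=\frac{1-q^x}{1-q}+q^x\frac{1-q^k}{1-q}$. More generally, writing $y\in\mathbb Z_p$ for the integration variable in place of $k$, we have $[x+y]_q=[x]_q+q^x[y]_q$, where $[x]_q$ and $q^x$ are constants as far as the measure $d\mu_{-1}(y)$ is concerned.

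First I would expand, by the ordinary binomial theorem,
\[
[x+y]_q^n=\bigl([x]_q+q^x[y]_q\bigr)^n=\sum_{k=0}^n\binom nk[x]_q^{n-k}q^{kx}[y]_q^k.
\]
Then I would multiply through by $q^y$ and integrate over $\mathbb Z_p$ against $\mu_{-1}$; since the sum is finite, the fermionic $p$-adic integral passes through it term by term by linearity of $I_{-1}$, and the constants $[x]_q^{n-k}q^{kx}$ may be pulled out, giving
\[
\int_{\Z}[x+y]_q^nq^yd\mu_{-1}(y)=\sum_{k=0}^n\binom nk[x]_q^{n-k}q^{kx}\int_{\Z}[y]_q^kq^yd\mu_{-1}(y).
\]
By Proposition \ref{q-e-witt} the left-hand side is exactly $E_{n,q}(x)$, and by the Witt formula in Theorem \ref{witt-open} each integral on the right-hand side equals $E_{k,q}$. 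Substituting these identifications yields the claimed formula.

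There is essentially no hard step here: the only things to verify are the one-line $q$-number identity $[k+x]_q=[x]_q+q^x[k]_q$ and the (immediate) interchange of a finite sum with $I_{-1}$. One could alternatively avoid the integral entirely and obtain the same relation directly from the explicit expression in Proposition \ref{q-E-re}(1), by writing $q^{xk}=(q^x)^k$ and re-indexing against the Witt formula of Theorem \ref{witt-open}, or from Lemma \ref{gen-le}(2) by carrying out the same binomial expansion of $[k+x]_q^n$ inside the series $2\sum_{k=0}^\infty(-1)^kq^k[k+x]_q^n$ and summing in $k$ first; the $p$-adic integral route is the most economical given what has already been established.
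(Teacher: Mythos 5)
Your proof is correct, and it is essentially the argument the paper intends: the corollary is stated without proof immediately after Proposition \ref{q-e-witt}, and the natural derivation is exactly your route --- the addition formula $[x+y]_q=[x]_q+q^x[y]_q$, the binomial expansion, linearity of the fermionic integral, and the two Witt formulas (Proposition \ref{q-e-witt} and Theorem \ref{witt-open}). No gaps.
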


Let $d\in\mathbb N$ with $d\equiv1\pmod{2}$ and $p$ be a fixed odd prime number.
We set
\begin{equation}\label{inv}
\begin{aligned}
&X=\varprojlim_N (\mathbb Z/dp^N\mathbb Z), \quad
X^*=\bigcup_{\substack{ 0<a<dp\\ (a,p)=1}} a+dp\mathbb Z_p,\\
&\,a+dp^N\mathbb Z_p=\{x\in X \mid x\equiv a\pmod{dp^N}\},
\end{aligned}
\end{equation}
where $a\in \mathbb Z$ with $0\leq a<dp^N$ (cf. \cite{KT2,TK07}).
Note that the natural map $\mathbb Z/dp^N\mathbb Z\to\mathbb Z/p^N\mathbb Z$
induces
\begin{equation}\label{pro}
\pi:X\to\Z.
\end{equation}
Hereafter, if $f$ is a function on $\Z,$ we denote by the same $f$ the function $f\circ\pi$ on $X.$
Namely we consider $f$ as a function on $X.$

Let $\chi$ be the Dirichlet's character with an odd conductor $d=d_\chi\in\mathbb N.$
Then the generalized Carlitz type $q$-Euler polynomials attached to $\chi$ defined by
\begin{equation}\label{open}
E_{n,\chi,q}(x)=\int_{X}\chi(a)[x+y]_q^nq^yd\mu_{-1}(y),
\end{equation}
where $n\in\mathbb Z^+$ and $x\in\mathbb Z_p.$
Then we have the generating function of generalized Carlitz type $q$-Euler polynomials attached to $\chi:$
\begin{equation}\label{prob-1}
F_{q,\chi}(x,t)=2\sum_{m=0}^\infty\chi(m)(-1)^mq^me^{[m+x]_qt} =\sum_{n=0}^\infty E_{n,\chi,q}(x)\frac{t^n}{n!}.
\end{equation}
Now fixed any $t\in\mathbb C_p$ with $\ord(t)>1/(p-1)$ and $|1-q|_p<1.$
From (\ref{prob-1}), we have
\begin{equation}\label{prob-2}
\begin{aligned}
F_{q,\chi}(x,t)&=2\sum_{m=0}^\infty\chi(m)(-q)^m\sum_{n=0}^\infty\frac1{(1-q)^n}\sum_{i=0}^n\binom ni(-1)^iq^{i(m+x)}\frac{t^n}{n!} \\
&=2\sum_{n=0}^\infty\frac1{(1-q)^n}\sum_{i=0}^n\binom ni(-1)^iq^{ix} \\
&\quad\times
\sum_{j=0}^{d-1}\sum_{l=0}^\infty\chi(j+dl)(-q)^{j+dl}q^{i(j+dl)}\frac{t^n}{n!} \\
&=2\sum_{n=0}^\infty\frac1{(1-q)^n}\sum_{j=0}^{d-1}\chi(j)(-q)^j
\sum_{i=0}^n\binom ni(-1)^i\frac{q^{i(x+j)}}{1+q^{d(i+1)}}\frac{t^n}{n!},
\end{aligned}
\end{equation}
where $x\in\mathbb Z_p$ and $d\in\mathbb N$ with $d\equiv1\pmod2.$
By (\ref{prob-1}) and (\ref{prob-2}), we can derive the following formula
\begin{equation}\label{prob-3}
\begin{aligned}
E_{n,\chi,q}(x)&=\frac1{(1-q)^n}\sum_{j=0}^{d-1}\chi(j)(-q)^j
\sum_{i=0}^n\binom ni(-1)^iq^{i(x+j)} \frac{2}{1+q^{d(i+1)}} \\
&=\frac1{(1-q)^n}\sum_{j=0}^{d-1}\chi(j)(-q)^j
\sum_{i=0}^n\binom ni(-1)^iq^{i(x+j)} \\
&\quad\times\lim_{N\to\infty}\sum_{l=0}^{p^N-1}(-1)^l(q^{d(i+1)})^l \\
&=\lim_{N\to\infty}\sum_{j=0}^{d-1}\sum_{l=0}^{p^N-1}\chi(j+dl)
\frac1{(1-q)^n}\sum_{i=0}^n\binom ni(-1)^iq^{i(j+dl+x)} \\
&\quad\times(-1)^{j+dl}q^{j+dl} \\
&=\lim_{N\to\infty}\sum_{a=0}^{dp^N-1}\chi(a)
\frac1{(1-q)^n}\sum_{i=0}^n\binom ni(-1)^iq^{i(a+x)}(-q)^{a} \\
&=\int_{X}\chi(y)[x+y]_q^nq^yd\mu_{-1}(y),
\end{aligned}
\end{equation}
where $x\in\mathbb Z_p$ and $d\in\mathbb N$ with $d\equiv1\pmod2.$
Therefore, we obtain the following

\begin{theorem}
$$
E_{n,\chi,q}(x)=\frac1{(1-q)^n}\sum_{j=0}^{d-1}\chi(j)(-q)^j
\sum_{i=0}^n\binom ni(-1)^iq^{i(x+j)} \frac{2}{1+q^{d(i+1)}},
$$
where $n\in\mathbb Z^+$ and $x\in\mathbb Z_p.$
\end{theorem}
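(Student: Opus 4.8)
The plan is to read the formula off the integral definition (\ref{open}) of $E_{n,\chi,q}(x)$, expanding $[x+y]_q^n$ by the binomial theorem and evaluating the resulting fermionic integrals over $X$ one residue class modulo $d$ at a time; this reproduces the first equality in the chain (\ref{prob-3}), whose last line is precisely that defining integral. Equivalently one could start from the generating function (\ref{prob-1}) and compare coefficients of $t^n/n!$, which is the route of (\ref{prob-2}); I describe the integral version because it makes the $p$-adic regularization transparent.

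First, since $[x+y]_q=(1-q^{x+y})/(1-q)$, the binomial theorem gives
\[
[x+y]_q^n=\frac1{(1-q)^n}\sum_{i=0}^n\binom ni(-1)^i q^{i(x+y)},
\]
so substituting into (\ref{open}), pulling the constant $q^{ix}$ out, and using linearity over the finite sum in $i$ yields
\[
E_{n,\chi,q}(x)=\frac1{(1-q)^n}\sum_{i=0}^n\binom ni(-1)^i q^{ix}\int_X\chi(y)\,q^{(i+1)y}\,d\mu_{-1}(y).
\]
Next I would compute $\int_X\chi(y)q^{(i+1)y}d\mu_{-1}(y)$ from its level-$N$ approximating sum $\sum_{a=0}^{dp^N-1}\chi(a)q^{(i+1)a}(-1)^a$, noting that $[dp^N]_{-1}=1$ since $dp^N$ is odd. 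Writing $a=j+dl$ with $0\le j\le d-1$, $0\le l\le p^N-1$, using $\chi(j+dl)=\chi(j)$ (as $d=d_\chi$) and the factorization $q^{(i+1)a}(-1)^a=(-q)^j q^{ij}\bigl(-q^{d(i+1)}\bigr)^l$ (the signs combine correctly because $d$ is odd), one gets
\[
\int_X\chi(y)\,q^{(i+1)y}\,d\mu_{-1}(y)=\sum_{j=0}^{d-1}\chi(j)(-q)^j q^{ij}\lim_{N\to\infty}\sum_{l=0}^{p^N-1}\bigl(-q^{d(i+1)}\bigr)^l,
\]
and the inner limit equals $\int_{\Z}q^{d(i+1)y}\,d\mu_{-1}(y)=2/(1+q^{d(i+1)})$ by (\ref{q-ex-o})--(\ref{q-ex-e}) applied with exponent $d(i+1)$. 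Substituting this back and regrouping (with $q^{ix}q^{ij}=q^{i(x+j)}$) yields exactly
\[
E_{n,\chi,q}(x)=\frac1{(1-q)^n}\sum_{j=0}^{d-1}\chi(j)(-q)^j\sum_{i=0}^n\binom ni(-1)^i q^{i(x+j)}\frac2{1+q^{d(i+1)}},
\]
the assertion; as a consistency check, setting $d=1$ and $\chi\equiv1$ recovers part (1) of Proposition \ref{q-E-re}.

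The one genuine point, as opposed to bookkeeping with the binomial theorem and the periodicity of $\chi$, is the evaluation of $\lim_{N\to\infty}\sum_{l=0}^{p^N-1}(-q^{d(i+1)})^l$. Since $|1-q|_p<1$ forces $|q|_p=1$, this alternating series does not converge termwise in $\mathbb C_p$; it acquires meaning only as the fermionic $p$-adic integral --- the limit over truncations of length $p^N$ --- and it is exactly this mechanism that turns it into $2/(1+q^{d(i+1)})$. One also needs $1+q^{d(i+1)}$ to be a unit in $\mathbb C_p$ so that the division is legitimate, which holds since $|1-q^{d(i+1)}|_p\le|1-q|_p<1$ and $|2|_p=1$ ($p$ odd) give $|1+q^{d(i+1)}|_p=1$ by the ultrametric inequality. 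Granted these two facts, the rest of the argument is routine.
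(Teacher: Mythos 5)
Your proposal is correct and is essentially the paper's own argument: your computation is the chain (\ref{prob-3}) read from the bottom line (the defining integral) up to the top line (the closed form), using the same decomposition $a=j+dl$ into residue classes mod $d$ and the same evaluation of $\lim_{N\to\infty}\sum_{l=0}^{p^N-1}(-q^{d(i+1)})^l=2/(1+q^{d(i+1)})$. Your explicit remarks on why that limit exists and why $1+q^{d(i+1)}$ is a unit are sound and make precise what the paper leaves implicit, but they do not change the route.
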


Let $\omega$ denote the Teichm\"uller character mod $p.$ For $x\in X^*,$ we set
\begin{equation}\label{Tei}
\langle x\rangle=[x]_q\omega^{-1}(x)=\frac{[x]_q}{\omega(x)}.
\end{equation}
Note that since $|\langle x\rangle-1|_p<p^{-1/(p-1)},$ $\langle x\rangle^s$ is defined
by $\exp(s\log_p\langle x\rangle)$ for $|s|_p\leq1$ (cf. \cite{K07,K08-1,Sa}).
We note that $\langle x\rangle^s$ is analytic for $s\in\Z.$

We define an interpolation function for Carlitz type $q$-Euler numbers.
For $s\in\Z,$
\begin{equation}\label{p-l-q}
l_{p,q}(s,\chi)=\int_{X^*}\langle x\rangle^{-s}\chi(x)q^xd\mu_{-1}(x).
\end{equation}
Then $l_{p,q}(s,\chi)$ is analytic for $s\in\Z.$

The values of this function at non-positive integers are given by

\begin{theorem}\label{int-p-ell}
For integers $n\geq0,$
$$l_{p,q}(-n,\chi)=E_{n,\chi_n,q}-\chi_n(p)[p]_q^nE_{n,\chi_n,q^p},$$
where $\chi_n=\chi\omega^{-n}.$ In particular, if $\chi=\omega^n,$ then
$l_{p,q}(-n,\omega^n)=E_{n,q}-[p]_qE_{n,q^p}.$
\end{theorem}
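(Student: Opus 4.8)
The plan is to evaluate the defining integral (\ref{p-l-q}) at $s=-n$ and then split the domain $X^*$ into pieces on which the integrals are recognizable as generalized $q$-Euler numbers. Since $n\in\mathbb Z^+$, the analytic function $\langle x\rangle^{-s}=\exp(-s\log_p\langle x\rangle)$ specializes at $s=-n$ to the ordinary power $\langle x\rangle^{n}$, and by the definition (\ref{Tei}) we have $\langle x\rangle^{n}=[x]_q^n\,\omega^{-n}(x)$ for $x\in X^*$. Writing $\chi_n=\chi\omega^{-n}$, this gives
$$l_{p,q}(-n,\chi)=\int_{X^*}[x]_q^n\,\chi_n(x)\,q^x\,d\mu_{-1}(x).$$
The integrand on the right now makes sense on all of $X$ (the restriction to $X^*$ having been needed only so that $\omega$ and $\langle x\rangle$ are well behaved), which is what legitimizes the next step.

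Next I would use the partition $X=X^*\sqcup pX$ with $pX=\{x\in X:p\mid x\}$, together with additivity of the integral, to write
$$l_{p,q}(-n,\chi)=\int_{X}[x]_q^n\chi_n(x)q^x\,d\mu_{-1}(x)-\int_{pX}[x]_q^n\chi_n(x)q^x\,d\mu_{-1}(x).$$
The first integral is, by the definition (\ref{open}) (equivalently by the computation (\ref{prob-3})) evaluated at the argument $0$, precisely $E_{n,\chi_n,q}$. For the second integral I would perform the substitution $x\mapsto px$: from (\ref{Iqf})--(\ref{q=-1}) one gets $\mu_{-1}(pa+dp^{N+1}\mathbb Z_p)=\lim_{q\to-1}q^{pa}/[dp^{N+1}]_q=(-1)^{pa}=(-1)^a$, using that $d$ and $p$ are odd (so $[dp^{N+1}]_{-1}=1$ and $(-1)^{pa}=(-1)^a$), whence $\int_{pX}f(x)\,d\mu_{-1}(x)=\int_X f(px)\,d\mu_{-1}(x)$ with no extra factor of $p$. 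Combining this with the elementary identities $[px]_q=[p]_q[x]_{q^p}$, $q^{px}=(q^p)^x$ and $\chi_n(px)=\chi_n(p)\chi_n(x)$ turns the second integral into
$$\chi_n(p)\,[p]_q^n\int_X[x]_{q^p}^n\,\chi_n(x)\,(q^p)^x\,d\mu_{-1}(x)=\chi_n(p)\,[p]_q^n\,E_{n,\chi_n,q^p},$$
again by (\ref{open}) with $q$ replaced by $q^p$. Putting the two pieces together yields $l_{p,q}(-n,\chi)=E_{n,\chi_n,q}-\chi_n(p)[p]_q^nE_{n,\chi_n,q^p}$. The special case follows at once: if $\chi=\omega^n$ then $\chi_n$ is the principal character, so $\chi_n(p)=1$ and $E_{n,\chi_n,q}=E_{n,q}$, $E_{n,\chi_n,q^p}=E_{n,q^p}$ by Theorem \ref{witt-open} (equivalently Proposition \ref{q-e-witt} at $x=0$).

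The main obstacle I anticipate is the bookkeeping in the $pX$-substitution: one must verify that the fermionic measure of the cosets $pa+dp^{N+1}\mathbb Z_p$ is again $(-1)^a$ rather than $(-1)^a/p$, which is the fermionic analogue of the bosonic distribution relation and is exactly where the oddness of $d$ and $p$ is used. A related, purely formal, point is that the integral representation (\ref{open}) of $E_{n,\chi_n,q}$ over $X=\varprojlim\mathbb Z/dp^N\mathbb Z$ is independent of which odd modulus divisible by the conductor of $\chi_n$ is used, so that it may be taken over the same $X$ that appears in $l_{p,q}(s,\chi)$; when $\omega^{-n}$ is nontrivial one has $p\mid d_{\chi_n}$, so $\chi_n(p)=0$ and $\int_{pX}=0$ as well, and the asserted formula persists in that degenerate case. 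Everything else is the elementary $q$-algebra already recorded in Section 2.
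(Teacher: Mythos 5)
Your proposal is correct and follows essentially the same route as the paper: specialize $s=-n$ so that $\langle x\rangle^{n}=[x]_q^n\omega^{-n}(x)$, split $X^*=X\setminus pX$, and identify the two resulting integrals with $E_{n,\chi_n,q}$ and $\chi_n(p)[p]_q^nE_{n,\chi_n,q^p}$ via (\ref{open}). In fact you supply more detail than the paper does, in particular the verification that the fermionic measure of $pa+dp^{N+1}\mathbb Z_p$ is $(-1)^a$ (so the substitution $x\mapsto px$ carries no extra factor of $p$), a point the paper leaves implicit in the notation $d\mu_{-1}(px)$.
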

\begin{proof}
$$
\begin{aligned}
l_{p,q}(-n,\chi)&=\int_{X^*}\langle x\rangle^{n}\chi(x)q^xd\mu_{-1}(x) \\
&=\int_{X}[x]_q^n\chi_n(x)q^xd\mu_{-1}(x)-\int_{X}[px]_q^n\chi_n(px)q^{px}d\mu_{-1}(px) \\
&=\int_{X}[x]_q^n\chi_n(x)q^xd\mu_{-1}(x)-[p]_q^n\chi_n(p)\int_{X}[x]_{q^p}^n\chi_n(x)q^{px}d\mu_{-1}(x).
\end{aligned}
$$
Therefore by (\ref{open}), the theorem is proved.
\end{proof}

Let $\chi$ be the Dirichlet's character with an odd conductor $d=d_\chi\in\mathbb N.$
Let $F$ be a positive integer multiple of $p$ and $d.$ Then by (\ref{tw-q-E-gen}) and (\ref{prob-1}), we have
\begin{equation}\label{rabbe}
\begin{aligned}
F_{q,\chi}(x,t)&=2\sum_{m=0}^\infty\chi(m)(-1)^mq^me^{[m+x]_qt} \\
&=2\sum_{a=0}^{F-1}\chi(a)(-q)^a\sum_{k=0}^\infty(-q)^{Fk}e^{[F]_q\left[k+\frac{x+a}{F}\right]_{q^F}t} \\
&=\sum_{n=0}^\infty\left([F]_q^n\sum_{a=0}^{F-1}\chi(a)(-q)^aE_{n,q^F}\left(\frac{x+a}{F}\right)\right)\frac{t^n}{n!}.
\end{aligned}
\end{equation}
Therefore we obtain the following
\begin{equation}\label{rabbe-lem}
E_{n,\chi,q}(x)=[F]_q^n\sum_{a=0}^{F-1}\chi(a)(-q)^aE_{n,q^F}\left(\frac{x+a}{F}\right).
\end{equation}
If $\chi_n(p)\neq0,$ then $(p,d_{\chi_n})=1,$ so that $F/p$ is a multiple of $d_{\chi_n}.$
From (\ref{rabbe-lem}), we derive
\begin{equation}\label{rabbe-ell}
\begin{aligned}
\chi_n(p)[p]_q^nE_{n,\chi_n,q^p}
&=\chi_n(p)[p]_q^n[F/p]_{q^p}^n\sum_{a=0}^{F/p-1}\chi_n(a)(-q^p)^aE_{n,(q^p)^{F/p}}\left(\frac{a}{F/p}\right)\\
&=[F]_q^n\sum_{\substack{a=0\\p\mid a}}^F\chi_n(a)(-q)^aE_{n,q^F}\left(\frac{a}{F}\right).
\end{aligned}
\end{equation}
Thus we have
\begin{equation}\label{ell-le}
\begin{aligned}
E_{n,\chi_n,q}-\chi_n(p)[p]_q^nE_{n,\chi_n,q^p}
&=[F]_q^n\sum_{\substack{a=0\\p\nmid a}}^{F-1}\chi_n(a)(-q)^aE_{n,q^F}\left(\frac{a}{F}\right).
\end{aligned}
\end{equation}
By Corollary \ref{q-E-re-co}, we easily see that
\begin{equation}\label{dis-poly}
\begin{aligned}
E_{n,q^F}\left(\frac{a}{F}\right)&=\sum_{k=0}^n\binom nk \left[\frac{a}{F}\right]_{q^F}^{n-k}q^{ka}E_{k,q^F} \\
&=[F]_q^{-n}[a]_q^n\sum_{k=0}^n\binom nk \left[\frac{F}{a}\right]_{q^a}^{k}q^{ka}E_{k,q^F}.
\end{aligned}
\end{equation}
From (\ref{ell-le}) and (\ref{dis-poly}), we have
\begin{equation}\label{ell-le-ft}
\begin{aligned}
E_{n,\chi_n,q}&-\chi_n(p)[p]_q^nE_{n,\chi_n,q^p} \\
&=[F]_q^n\sum_{\substack{a=0\\p\nmid a}}^{F-1}\chi_n(a)(-q)^aE_{n,q^F}\left(\frac{a}{F}\right)\\
&=\sum_{\substack{a=0\\p\nmid a}}^{F-1}\chi(a)\langle a\rangle^n(-q)^a
\sum_{k=0}^\infty\binom nk \left[\frac{F}{a}\right]_{q^a}^{k}q^{ka}E_{k,q^F},
\end{aligned}
\end{equation}
since $\chi_n(a)=\chi(a)\omega^{-n}(a).$ From Theorem \ref{int-p-ell} and (\ref{ell-le-ft}),
\begin{equation}\label{ne-val}
\begin{aligned}
l_{p,q}(-n,\chi)=\sum_{\substack{a=0\\p\nmid a}}^{F-1}\chi(a)\langle a\rangle^n(-q)^a
\sum_{k=0}^\infty\binom nk \left[\frac{F}{a}\right]_{q^a}^{k}q^{ka}E_{k,q^F}
\end{aligned}
\end{equation}
for $n\in\mathbb Z^+.$
Therefore we have the following theorem.

\begin{theorem}
Let $F$ be a positive integer multiple of $p$ and $d=d_\chi,$ and let
$$
l_{p,q}(s,\chi)=\int_{X^*}\langle x\rangle^{-s}\chi(x)q^xd\mu_{-1}(x), \quad s\in\Z.
$$
Then $l_{p,q}(s,\chi)$ is analytic for $s\in\Z$ and
$$l_{p,q}(s,\chi)=\sum_{\substack{a=0\\p\nmid a}}^{F-1}\chi(a)\langle a\rangle^{-s}(-q)^a
\sum_{k=0}^\infty\binom{-s}k \left[\frac{F}{a}\right]_{q^a}^{k}q^{ka}E_{k,q^F}.$$
Furthermore, for $n\in\mathbb Z^+$
$$l_{p,q}(-n,\chi)=E_{n,\chi_n,q}-\chi_n(p)[p]_q^nE_{n,\chi_n,q^p}.$$
\end{theorem}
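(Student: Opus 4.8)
The plan is to evaluate the integral \eqref{p-l-q} directly, by breaking $X^*$ into residue classes modulo $F$ and expanding $\langle x\rangle^{-s}$ on each class as a binomial series. Since $F$ is an (odd) multiple of both $d=d_\chi$ and $p$, we write $X^*=\bigsqcup_{0\le a<F,\ p\nmid a}(a+F\mathbb Z_p)$, and on the class $a+F\mathbb Z_p$ both $\chi$ and $\omega$ are constant, equal to $\chi(a)$ and $\omega(a)$. Writing $x=a+Fy$ with $y\in\mathbb Z_p$ and using $[a+Fy]_q=[a]_q+q^a[F]_q[y]_{q^F}$ together with $[F]_q/[a]_q=[F/a]_{q^a}$, one obtains
$$\langle x\rangle=\frac{[x]_q}{\omega(a)}=\langle a\rangle\Bigl(1+q^a\Bigl[\tfrac{F}{a}\Bigr]_{q^a}[y]_{q^F}\Bigr),\qquad \langle x\rangle^{-s}=\langle a\rangle^{-s}\sum_{k=0}^\infty\binom{-s}{k}q^{ka}\Bigl[\tfrac{F}{a}\Bigr]_{q^a}^{k}[y]_{q^F}^{k}.$$

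First I would record the estimate that makes everything work: because $p\mid F$ and $p\nmid a$, one has $|[F/a]_{q^a}|_p=|1-q^F|_p/|1-q^a|_p=|F|_p<1$, while $|q^a|_p=1$, $|[y]_{q^F}|_p\le1$ and $|\binom{-s}{k}|_p\le1$; thus the $k$-th term of the series above is $O(|F|_p^{k})$ uniformly in $y\in\mathbb Z_p$ and locally uniformly in $s\in\mathbb Z_p$. This legitimizes term-by-term integration, and, since Witt's formula (Theorem \ref{witt-open} applied with $q$ replaced by $q^F$) gives $\int_{\mathbb Z_p}[y]_{q^F}^{k}(q^F)^y\,d\mu_{-1}(y)=E_{k,q^F}$, which is $p$-adically bounded in $k$, it also shows that the resulting series in $s$ is $p$-adic analytic, each $\langle a\rangle^{-s}$ being analytic by \eqref{Tei} and each $\binom{-s}{k}$ polynomial in $s$. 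In particular $l_{p,q}(s,\chi)$ is analytic on $\mathbb Z_p$.

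Next I would carry out the term-by-term integration. Since $F$ is odd, the fermionic measure satisfies $\int_{a+F\mathbb Z_p}f(x)\,d\mu_{-1}(x)=(-1)^{a}\int_{\mathbb Z_p}f(a+Fy)\,d\mu_{-1}(y)$; combining this with $q^x=q^a(q^F)^y$ yields $\int_{a+F\mathbb Z_p}[y]_{q^F}^{k}\,q^x\,d\mu_{-1}(x)=(-q)^{a}E_{k,q^F}$. Substituting the binomial expansion of $\langle x\rangle^{-s}$ into \eqref{p-l-q} and summing over the classes then gives exactly
$$l_{p,q}(s,\chi)=\sum_{\substack{0\le a<F\\ p\nmid a}}\chi(a)\langle a\rangle^{-s}(-q)^{a}\sum_{k=0}^\infty\binom{-s}{k}\Bigl[\tfrac{F}{a}\Bigr]_{q^a}^{k}q^{ka}E_{k,q^F}.$$

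Finally, for $s=-n$ with $n\in\mathbb Z^+$ one has $\binom{-s}{k}=\binom{n}{k}$, which vanishes for $k>n$, so the inner sum truncates and the right-hand side reduces to the finite expression in \eqref{ne-val}; by the chain of identities leading to \eqref{ell-le-ft} and by Theorem \ref{int-p-ell} this equals $E_{n,\chi_n,q}-\chi_n(p)[p]_q^nE_{n,\chi_n,q^p}$, which proves the last assertion. (Equivalently, both sides are analytic on $\mathbb Z_p$ and agree at $s=-n$ for every $n\ge0$, a set having $0$ as a limit point since $-p^{k}\to0$, so they coincide.) The one genuinely delicate point is the uniform-convergence estimate of the second paragraph — precisely the observation that $p\mid F$ forces $[F/a]_{q^a}\in p\mathbb Z_p$, so the binomial expansion of $\langle x\rangle^{-s}$ converges uniformly over the whole class $a+F\mathbb Z_p$ — together with correctly tracking the $(-1)^{a}$ factor in the measure decomposition.
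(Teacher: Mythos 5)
Your proof is correct, but it follows a genuinely different route from the paper's. The paper never computes the integral \eqref{p-l-q} directly for general $s$: it first proves the distribution relation \eqref{rabbe-lem}, $E_{n,\chi,q}(x)=[F]_q^n\sum_{a=0}^{F-1}\chi(a)(-q)^aE_{n,q^F}\bigl(\frac{x+a}{F}\bigr)$, by manipulating the generating function \eqref{prob-1}, isolates the terms with $p\mid a$ to express $\chi_n(p)[p]_q^nE_{n,\chi_n,q^p}$ as in \eqref{rabbe-ell}, subtracts to get \eqref{ell-le}, and then expands $E_{n,q^F}(a/F)$ via Corollary \ref{q-E-re-co} to reach \eqref{ne-val} at the non-positive integers $s=-n$; the formula for general $s$ is then obtained by formally replacing $n$ with $-s$. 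You instead split $X^*$ into the classes $a+F\mathbb Z_p$ with $p\nmid a$, expand $\langle x\rangle^{-s}=\langle a\rangle^{-s}\bigl(1+q^a[F/a]_{q^a}[y]_{q^F}\bigr)^{-s}$ binomially, and integrate term by term using Witt's formula (Theorem \ref{witt-open} with $q\mapsto q^F$) and the sign rule $\int_{a+F\mathbb Z_p}f\,d\mu_{-1}=(-1)^a\int_{\mathbb Z_p}f(a+Fy)\,d\mu_{-1}(y)$ for odd $F$. Your approach buys something the paper leaves implicit: the estimate $|[F/a]_{q^a}|_p=|F|_p<1$ (valid since $p\mid F$, $p\nmid a$) justifies both the convergence of the infinite $k$-sum and the analyticity of $l_{p,q}(s,\chi)$ in $s$, points the paper asserts without proof; it also yields the general-$s$ identity directly rather than by continuation from $s\in-\mathbb Z^+$. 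The paper's route buys the intermediate identities \eqref{rabbe-lem}--\eqref{ell-le}, which are of independent interest, and keeps all sums finite until the last step. Two small remarks: like the paper, you need $F$ odd (so that $(-1)^{Fy}=(-1)^y$), which the theorem statement does not say explicitly but which is the standing convention here; and your final step correctly reduces the interpolation claim to Theorem \ref{int-p-ell}, which is exactly what the paper does as well.
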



\bibliography{central}

\end{document}